\numberwithin{equation}{section}
\newtheorem{thm}{Theorem}
\newtheorem{lem}[thm]{Lemma}
\newtheorem{rmk}[thm]{Remark}
\newtheorem{definition}[thm]{Definition}
\newtheorem{problem}[thm]{Problem}
\newtheorem{proposition}[thm]{Proposition}
\begin{document}

\title{The optimal symmetric quasi-Banach range of the discrete Hilbert transform}

\author{K. Tulenov}
\address{
Al-Farabi Kazakh National University, 050040 Almaty, Kazakhstan;
 Institute of Mathematics and Mathematical Modeling, 050010 Almaty, Kazakhstan.}
\email{tulenov@math.kz}


\subjclass[2010]{46E30, 47B10, 46L51, 46L52, 44A15;  Secondary 47L20, 47C15.}
\keywords{symmetric (quasi-)Banach sequence spaces, discrete Hilbert transform, discrete Calder\'{o}n operator, optimal symmetric range.}
\date{}
\begin{abstract} We identify symmetric quasi-Banach range of the discrete Calder\'{o}n operator and Hilbert transform acting on a symmetric quasi-Banach sequence space. As an application we present an example of optimal range in the case when the domain of those operators is the weak-$\ell_{1}$ space of sequences.
\end{abstract}

\maketitle

\section{Introduction}

In this paper, we consider symmetric sequence spaces (see \cite{LT,LSZ} and Definitions \ref{Sym} below).
Our study concerns the behavior of discrete Calder\'{o}n operator and Hilbert transform on such spaces and is motivated by the following connected problem.

\begin{problem}\label{dis case} Given a symmetric quasi-Banach sequence space $E=E(\mathbb{Z}),$ determine the least symmetric quasi-Banach sequence space $F=F(\mathbb{Z})$ such that
$\mathcal{H}^{d}(E)\subseteq F,$
where the Hilbert transform $\mathcal{H}^d$ is given by the formulae
$$(\mathcal{H}^{d}x)(n):=\frac{1}{\pi}\sum_{\substack{k\in\mathbb{Z}\\ k\neq n}}\frac{x(k)}{k-n},\quad x\in E(\mathbb{Z}).$$
\end{problem}

We shall be referring to the space $F(\mathbb{Z})$ as the optimal range space for the operator $\mathcal{H}^{d}$ restricted to the domain  $E(\mathbb{Z})\subset \ell_{\log}(\mathbb{Z}),$
where $\ell_{\log}(\mathbb{Z})$ is the Lorentz space of sequences associated with the function $\log(1+t),$ $t>0.$ Later we will show that the maximal symmetric domain for $\mathcal{H}^{d}$ is the space $\ell_{\log}(\mathbb{Z})$ (see Remark \ref{dom of Hilbert trans}).
The class of symmetric Banach spaces $E$ of sequences with Fatou norm (that is, when the norm closed unit ball $B_E$ of $E$ is closed in $E$ with respect to the almost everywhere convergence) such that $\mathcal{H}^{d}(E)\subseteq E$ was characterized by K. Anderson \cite[Theorem 3]{An} (see \cite[Theorem 2.1]{B} for the continuous case). A careful analysis of their proofs actually shows that the just cited result characterize the class of all symmetric Banach sequence spaces $E=E(\mathbb{Z})$ with the Fatou norm such that the optimal range for the operator $\mathcal{H}^{d}$ coincides with $E$. Nowadays, such a characterization is customarily stated in terms of the so-called Boyd indices  \cite{BSh,KPS,LT} and may be viewed as a satisfactory resolution of the Problem \ref{dis case} in the case of symmetric spaces $E$ possessing {\it non-trivial} Boyd indices.
However, the problem remains open if we deal with a symmetric space $E$ whose (at least) one Boyd index happens to be {\it trivial}.
If we restrict our attention to the subclass of symmetric {\it Banach} spaces $E$ with Fatou norm {\color{red} reduces to a familiar problem settled by D. Boyd} \cite{B} in 1967, and K. Anderson \cite{An} in 1976 for discrete case. Indeed, in this special case  \cite[Theorem 3]{An} asserts that $\mathcal{H}^d:E(\mathbb{Z})\to F(\mathbb{Z})$ if and only if $S^d:E(\mathbb{Z}_{+})\to F(\mathbb{Z}_{+}),$ where the operator $S^d,$ known as the Calder\'{o}n operator, is defined by the formula
$$(S^dx)(n)=\frac{1}{n+1}\sum_{k=0}^{n}x(k)+\sum_{k=n+1}^{\infty}\frac{x(k)}{k},\quad x\in \ell_{\log}(\mathbb{Z}_{+}).$$
Effectively, the problem reduces to describing the optimal receptacle of the operator $S^{d}.$

Consider the case when $E=\ell_p,$ $1<p<\infty.$ By Hardy's inequality, $S^{d}:\ell_p(\mathbb{Z}_{+})\to \ell_p(\mathbb{Z}_{+})$ and \cite[Theorem 3]{An} yields that $\mathcal{H}^{d}:\ell_p(\mathbb{Z})\to \ell_p(\mathbb{Z}).$ Now, if $E$ is a Banach interpolation space for the couple $(\ell_p,\ell_q),$ $1<p<q<\infty,$ then $\mathcal{H}^{d}:E(\mathbb{Z})\to E(\mathbb{Z}).$ A careful inspection of the proof in \cite{An} yields that, in this case, $E(\mathbb{Z})$ is the optimal receptacle for $\mathcal{H}^{d}$ restricted on $E(\mathbb{Z}).$ A similar assertion for the symmetric sequence spaces with Fatou norm can be extracted from \cite[Theorem 3]{An}.

In the case $p=1,$ Komori's result \cite{Ko} (Kolmogorov's result in the continuous case \cite{Kol}) asserts that the operator $\mathcal{H}^{d}$ maps the symmetric Banach sequence space $\ell_1(\mathbb{Z})$ to the symmetric {\it quasi-Banach} sequence space $\ell_{1,\infty}(\mathbb{Z}).$ From this, one might guess that, for $E=\ell_1(\mathbb{Z}),$ the optimal range of $\mathcal{H}^{d}$ is $\ell_{1,\infty}(\mathbb{Z}),$ which strongly suggests that the natural setting for Problem \ref{dis case} is that of symmetric quasi-Banach spaces. Addressing precisely this framework, one of our main results, Theorem \ref{quasi-banach opt range} provides a complete description of the optimal range $F$ for a given symmetric quasi-Banach space $E$ (under a mild technical assumption that $E(\mathbb{Z}_{+})\subset\ell_{\log}(\mathbb{Z}_{+}).$ It should be pointed out that our results allow a substantial extension of Anderson's and Komori's results cited above thereby complementing \cite[Theorem 3]{An} and \cite[Theorem]{Ko}.

The structure of this paper is as follows.
Section 2 recalls the necessary theory on symmetric quasi-Banach sequence spaces.
 In Section 3, we define the optimal range for the discrete Calder\'{o}n operator among the symmetric quasi-Banach sequence spaces.
  It is also shown the same result for the discrete Hilbert transform $\mathcal{H}^{d}$ in this section. Similar constructions were investigated in \cite{ST,ST1} for the Hardy and Hardy type operators in continuous case.
  As an application of the main result, we show an example of optimal range for the Calder\'{o}n operator and Hilbert transform when their domain is $\ell_{1,\infty}$ space of sequences. 

\section{Preliminaries}

\subsection{Symmetric quasi-Banach sequence spaces}

Let $I=\mathbb{Z}_+$ (resp. $\mathbb{Z}$), where $\mathbb{Z}_+:=\{0,1,2,...\}$ (resp. $\mathbb{Z}=\{...-2,-1,0,1,2,...\},$ and
let $(I,\nu)$ be a measure space equipped with counting measure $\nu.$ Let $L(I)$ be the space of finite $\nu$-a.e., measurable sequencess (with identification $\nu$ a.e.) on $I$. Define $L_{0}(I,\nu)$ (or $L_{0}(I)$) as the subalgebra of $L(I)$ which consists of all sequences $x$ such that $\nu(\{|x| > s\})$ is finite for
some $s.$
For $x\in L_{0}(I),$ we denote by $\mu(x)$ the decreasing rearrangement of the sequence $|x|,$ which is the sequence $|x|=\{|x(n)|\}_{n\geq 0}$ rearranged to be in decreasing order.
If $I = \mathbb{Z}_+$ (resp. $\mathbb{Z}$) and $\nu$ is the counting measure, then $L_{0}(I)=\ell_\infty(I)$, where $\ell_\infty(I)$ denotes the space of all bounded sequences on $I$. Briefly we denote $\ell_\infty(I)$ by $\ell_\infty.$

\begin{definition}\label{Sym} We say that $(E,\|\cdot\|_E)$ is a symmetric (quasi-)Banach sequence space on $I$ if the following hold:
\begin{enumerate}[{\rm (i)}]
\item $E$ is a subset of $\ell_\infty;$
\item $(E,\|\cdot\|_E)$ is a (quasi-)Banach space;
\item If $x\in E$ and $y\in \ell_\infty$  are such that $|y|\leq|x|,$ then $y\in E$ and $\|y\|_E\leq\|x\|_E;$
\item If $x\in E$ and $y\in \ell_\infty$ are such that $\mu(y)=\mu(x),$ then $y\in E$ and $\|y\|_E=\|x\|_E.$
\end{enumerate}
\end{definition}
For the general theory of symmetric spaces of functions and sequences, we refer the reader to \cite{BSh,KPS,LT}.

The discrete dilation operators $\sigma_m,$ $m\in\mathbb{N},$ on $\ell_{\infty}(\mathbb{Z}_{+})$ is defined by
$$\sigma_{m}(x):=\{\underbrace{x(0),x(0),...,x(0)}_{m \,\ \text{terms}},\underbrace{x(1),x(1),...,x(1)}_{m \,\ \text{terms}},...,\underbrace{x(n),x(n),...,x(n)}_{m \,\ \text{terms}},...\}.$$
It is obvious that the dilation operator $\sigma_{m}$ is continuous in $\ell_{\infty}(\mathbb{Z}_{+})$ (see \cite[Chapter II.3, p. 96]{KPS}).

\subsection{Lorentz sequence spaces}\label{S Lorentz}

Let $\varphi$  be an increasing concave function on $\mathbb{R}_{+}$ such that $\lim_{t\rightarrow 0+}\varphi(t)=0.$
 The Lorentz sequence space $\ell_{\varphi}(I)$ is defined by setting
$$\ell_{\varphi}(I):=\left\{x\in \ell_{\infty}(I): \|x\|_{\ell_{\varphi}}=\sum_{n=0}^{\infty}\mu(n,x)(\varphi(n+1)-\varphi(n))<\infty\right\},$$
and equipped with the norm
\begin{equation}\label{Lorsec}
\|x\|_{\ell_{\varphi}}=\sum_{n=0}^{\infty}\mu(n,x)(\varphi(n+1)-\varphi(n)).
\end{equation}
These spaces are examples of symmetric Banach sequence spaces.
In particular, if $\varphi(t):=\log(1+t),\,\ t>0,$ then the Lorentz sequence space $\ell_{\log}(I)$ is defined as follows:
$$\ell_{\log}(I):=\left\{x\in \ell_{\infty}(I):\|x\|_{\ell_{\log}}=\sum_{n=0}^{\infty}\frac{\mu(n,x)}{n+1}<\infty\right\}.$$
For more details on Lorentz spaces, we refer the reader to \cite[Chapter II.5]{BSh} and \cite[Chapter II.5]{KPS}.

\subsection{Weak $\ell_{1}$ and $m_{1,\infty}$ spaces}\label{weak L1}

The weak-$\ell_1$ sequence space $\ell_{1,\infty}$ on $I$ is defined as
$$\ell_{1,\infty}(I):=\left\{x\in \ell_{\infty}(I):\mu(n,x)=O\left(\frac{1}{1+n}\right)\right\},$$
equipped with the functional $\|\cdot\|_{\ell_{1,\infty}}$ defined by the formula

$$\|x\|_{\ell_{1,\infty}}:=\sup_{n\in\mathbb{Z}_{+}}(n+1)\mu(n,x).$$
It is easy to see that $\|\cdot\|_{\ell_{1,\infty}}$ is a quasi-norm. In fact, the space $(\ell_{1,\infty},\|\cdot\|_{\ell_{1,\infty}})$ is quasi-Banach.
(see \cite[Example 1.2.6, p. 24]{LSZ}).

Define the Marcinkiewicz (or Lorentz) space $m_{1,\infty}(I)$ by setting
\begin{equation}\label{Marsecuence}
m_{1,\infty}(I):=\left\{x\in \ell_{\infty}(I): \sup_{n\in\mathbb{Z}_{+}}\frac{1}{\log(2+n)}\sum_{k=0}^{n}\mu(k,x)<\infty\right\}
\end{equation}
equipped with the norm
$$\|x\|_{m_{1,\infty}}:=\sup_{n\in\mathbb{Z}_{+}}\frac{1}{\log(2+n)}\sum_{k=0}^{n}\mu(k,x)
$$
(see \cite[Example 1.2.7, p. 24]{LSZ}).
This is an example of a symmetric Banach sequence space. For more information on Marcinkiewicz spaces we refer to \cite[Chapter II.5]{BSh} and \cite[Chapter II.5]{KPS}.

Moreover, the space $m_{1,\infty}$ contains the quasi-Banach space $\ell_{1,\infty}$ of sequences, i.e. the following inclusion
$$\ell_{1,\infty}\subset m_{1,\infty}$$
is strict (see \cite[Lemma 1.2.8 and Example 1.2.9, pp. 25-26]{LSZ}).

\subsection{Discrete Calder\'{o}n operator and Hilbert transform}\label{calderon}
Define the space $(\ell_{1,\infty}+\ell_{\infty})(I)$ as follows:
$$(\ell_{1,\infty}+\ell_{\infty})(I):=\{x=x_{1}+x_{2}:x_{1}\in\ell_{1,\infty}(I),x_{2}\in\ell_{\infty}(I),\|x\|_{\ell_{1,\infty}+\ell_{\infty}}<\infty\},$$
where the quasi-norm is defined by
$$\|x\|_{\ell_{1,\infty}+\ell_{\infty}}:=\inf\{\|x_{1}\|_{\ell_{1,\infty}}+\|x_{2}\|_{\ell_{\infty}}:x_{1}\in\ell_{1,\infty}(I),x_{2}\in\ell_{\infty}(I)\}.$$
For each $x\in \ell_{\log}(\mathbb{Z}_{+}),$ define the discrete Calder\'{o}n operator $S:\ell_{\log}(\mathbb{Z}_{+})\rightarrow(\ell_{1,\infty}+\ell_{\infty})(\mathbb{Z}_{+})$ by
\begin{equation}\label{S dis}(S^{d}x)(n):=\frac{1}{n+1}\sum_{k=0}^{n}x(k)+\sum_{k=n+1}^{+\infty}\frac{x(k)}{k}, \,\,\ x\in \ell_{\log}(\mathbb{Z}_{+}).
\end{equation}
It is obvious that $S^{d}$ is a linear operator.
Next, it is easy to see that if $0<n<n_{0},$ then
$$\min\left(1,\frac{m}{n_{0}}\right)\leq \min\left(1,\frac{m}{n}\right)\leq \frac{n_{0}}{n}\cdot\min\left(1,\frac{m}{n_{0}}\right), \,\, (m>0).$$
So, if $x$ is nonnegative, it follows from the first of these inequalities that $(S^{d}x)(n)$ is a
decreasing function of $n\geq0.$ The operator $S^{d}$ is often applied to the decreasing rearrangement $\mu(x)$ of a function $x$ defined on some other measure space.
Since $S^{d}\mu(x)$ is itself decreasing, it is easy to see that $\mu(S^{d}\mu(x))=S^{d}\mu(x).$
Let $x\in \ell_{\log}(\mathbb{Z}_{+}).$ Since for each $n\geq0,$ the kernel
$K_{n}(k)=\frac{1}{k}\cdot\min\Big\{1,\frac{k}{n+1}\Big\}$
is a decreasing sequence of $k> 0,$ it follows  from \cite[Theorem II.2.2, p. 44]{BSh} that
\begin{equation}\label{S proper}\begin{split}
|(S^{d}x)(n)|
&\stackrel{\eqref{S dis}}{=}\Big|\sum_{k=0}^{\infty}x(k)\min\Big\{1,\frac{k}{n+1}\Big\}\frac{1}{k}\Big|\\
&\leq\sum_{k=0}^{\infty}|x(k)|\min\Big\{1,\frac{k}{n+1}\Big\}\frac{1}{k}\leq\sum_{k=0}^{\infty}\mu(k,x)\min\Big\{1,\frac{k}{n+1}\Big\}\frac{1}{k}\\
&\stackrel{\eqref{S dis}}{=}(S^{d}\mu(x))(n), \quad \forall n\in \mathbb{Z}_{+}.
\end{split}\end{equation}
For more information about these operators, we refer to \cite[Chapter III]{BSh} and \cite[Chapter II]{KPS}.

If $x\in \ell_{\log}(\mathbb{Z}),$ then the discrete Hilbert transform $\mathcal{H}^{d}$ is defined as in \cite{An} by
\begin{equation}\label{hilbert tr}
(\mathcal{H}^{d}x)(n):=\frac{1}{\pi}\sum_{\substack{k\in\mathbb{Z}\\ k\neq n}}\frac{x(k)}{n-k},\quad x\in\ell_{\log}(\mathbb{Z}).
\end{equation}

\begin{rmk}\label{dom of Hilbert trans} Let $x=x\chi_{[0,\infty)}$ such that $x$ is a non-negative decreasing sequence on $\mathbb{Z}_{+}.$ Then it is easy to see that
\begin{eqnarray*}\begin{split} |(\mathcal{H}^{d}x)(-n)|
&\stackrel{\eqref{hilbert tr}}{=}\frac{1}{\pi}\left|\sum_{k=0}^{\infty}\frac{x(k)}{-n-k}\right|\\
&=\frac{1}{\pi}\sum_{k=0}^\infty\frac{x(k)}{n+k}=\frac{1}{\pi}\left(\sum_{k=0}^{n}\frac{x(k)}{n+k}+\sum_{k=n+1}^{\infty}\frac{x(k)}{n+k}\right)\\
&\geq\frac{1}{\pi}\left(\sum_{k=0}^{n}\frac{x(k)}{2(n+1)}+\sum_{k=n+1}^{\infty}\frac{x(k)}{2k}\right)\\
&=\frac{1}{2\pi}\cdot\left(\frac{1}{n+1}\sum_{k=0}^{n}x(k)+\sum_{k=n+1}^{\infty}\frac{x(k)}{k}\right)\stackrel{\eqref{S dis}}{=}\frac{1}{2\pi}(S^dx)(n), \,\, n\in\mathbb{Z}_{+},
\end{split}\end{eqnarray*}
i.e. we have
$$\frac{1}{2\pi}(S^{d}\mu(x))(n)\leq |(\mathcal{H}^{d}x)(-n)|, \,\, n\in \mathbb{Z}_{+}.$$
Therefore, if $(\mathcal{H}^{d}x)(-n)$ exists, then it follows that $S^{d}\mu(x)$ exists, and it means $x$ belongs to the domain of $S^{d},$ i.e. $x\in \ell_{\log}(\mathbb{Z}_{+})$ (see \eqref{S dis}).
On the other hand, if $x\in\ell_{\log}(\mathbb{Z}_{+}),$ then by \cite[Theorem III.4.8, p. 138]{BSh}, we have
$$\mu(\mathcal{H}^{d}x)\leq c_{abs} S^{d}\mu(x),$$
which shows existence of $\mathcal{H}^{d}x.$
\end{rmk}

\section{Optimal symmetric quasi-Banach range for the discrete Calder\'{o}n operator and Hilbert transform}

In this section, we describe the optimal symmetric quasi-Banach sequence range spaces for the discrete Calder\'{o}n operator $S^{d}$  and Hilbert transform $\mathcal{H}^{d}$. A continuous version of the following lemma was proved in \cite{STZ}, here we use the discrete version of it.
\begin{lem}\label{measure conv}
Let $\{x_k\}_{k=1}^{\infty}\subset \ell_{\infty}(\mathbb{Z}_{+}),$ where $x_k=\{x_{k}(n)\}_{n\in\mathbb{Z}_{+}}.$
If the series
$$\sum_{k=1}^{\infty}\sigma_{2^{k}}\mu\big(n,x_{k}\big)$$
converges almost everywhere (a.e.) in $\ell_{\infty}(\mathbb{Z}_{+}),$
for all $n\in\mathbb{Z}_{+},$
then the series
$\sum_{k=1}^{\infty}x_{k}$ converges in measure in $\ell_{\infty}(\mathbb{Z}_{+})$
and we have
\begin{equation}\label{est conv}
\mu\Big(n,\sum_{k=1}^{\infty}x_k\Big)\leq\sum_{k=1}^{\infty}\sigma_{2^{k}}\mu(n,x_k), \,\ n\in\mathbb{Z}_{+}.
\end{equation}
\end{lem}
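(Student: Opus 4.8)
The plan is to reduce everything to the subadditivity of the decreasing rearrangement together with the elementary dyadic estimate $\sum_{k\ge1}\lfloor n/2^k\rfloor\le\sum_{k\ge1}n/2^k=n$, mirroring the continuous argument of \cite{STZ}.

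First I would record the finite subadditivity of $\mu$: for $m_1,\dots,m_N\in\mathbb{Z}_+$ and $y_1,\dots,y_N\in\ell_\infty(\mathbb{Z}_+)$,
\[
\mu\Big(\sum_{k=1}^N m_k,\ \sum_{k=1}^N y_k\Big)\le\sum_{k=1}^N\mu(m_k,y_k).
\]
This is obtained by iterating the two-term estimate $\mu(m+m',y+y')\le\mu(m,y)+\mu(m',y')$, which in turn follows from the distribution-function inclusion $\{|y+y'|>\mu(m,y)+\mu(m',y')\}\subseteq\{|y|>\mu(m,y)\}\cup\{|y'|>\mu(m',y')\}$ and the bound $\nu\{|y|>\mu(m,y)\}\le m$ valid for the counting measure.

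Next I would prove the inequality for the partial sums $S_N:=\sum_{k=1}^N x_k$. Applying the finite subadditivity with $m_k=\lfloor n/2^k\rfloor$, and using $\sum_{k=1}^N\lfloor n/2^k\rfloor\le n$ together with the monotonicity of $\mu$ in its first argument, gives
\[
\mu(n,S_N)\le\mu\Big(\sum_{k=1}^N\lfloor n/2^k\rfloor,\ S_N\Big)\le\sum_{k=1}^N\mu(\lfloor n/2^k\rfloor,x_k)=\sum_{k=1}^N\sigma_{2^k}\mu(n,x_k),
\]
since, by the definition of the dilation operator, $\sigma_{2^k}\mu(n,x_k)=\mu(\lfloor n/2^k\rfloor,x_k)$. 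The right-hand side is dominated by $\sum_{k=1}^\infty\sigma_{2^k}\mu(n,x_k)$, which converges by hypothesis.

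To obtain convergence in measure I would run the same estimate on a tail: for $M<N$, writing $S_N-S_M=\sum_{k=M+1}^N x_k$ and choosing the dilation indices $\lfloor n/2^{k-M}\rfloor$, the bound $\sum_{k=M+1}^N\lfloor n/2^{k-M}\rfloor\le n$ together with $\lfloor n/2^{k-M}\rfloor\ge\lfloor n/2^k\rfloor$ and the monotonicity of $\mu$ yields
\[
\mu(n,S_N-S_M)\le\sum_{k=M+1}^N\mu(\lfloor n/2^{k-M}\rfloor,x_k)\le\sum_{k=M+1}^N\sigma_{2^k}\mu(n,x_k),
\]
whose right-hand side is a tail of a convergent series, hence tends to $0$ as $M\to\infty$ uniformly in $N$. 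Thus $\{S_N\}$ is Cauchy in measure and converges in measure to some $S=\sum_k x_k$. Finally I would pass to the limit in the partial-sum inequality using the Lipschitz bound $\mu(n,S)\le\mu(n,S_N)+\mu(0,S-S_N)$ (the two-term estimate with second index $0$); since $\mu(0,S-S_N)=\|S-S_N\|_\infty\to0$ by the $n=0$ case of the tail estimate, letting $N\to\infty$ produces $\mu(n,S)\le\sum_{k=1}^\infty\sigma_{2^k}\mu(n,x_k)$, which is \eqref{est conv}. The main obstacle is precisely this passage to the limit — ensuring the finite-sum inequality survives; the clean way around it is the Lipschitz/Fatou property of the rearrangement, itself a byproduct of the two-term subadditivity, so that no separate semicontinuity theorem is needed and the remaining work is the dyadic bookkeeping matching the indices $\lfloor n/2^k\rfloor$ to the dilations $\sigma_{2^k}$.
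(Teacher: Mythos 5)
Your proof is correct and follows essentially the same route as the paper: your iterated two-term subadditivity of $\mu$ is exactly the inequality (2.23) from \cite{KPS} that the paper invokes, and your floor-function indices $\lfloor n/2^k\rfloor$ play the same role as the paper's normalized dyadic splitting $\varepsilon\cdot 2^{-k}/\sum_m 2^{-m}$, with the same tail estimate yielding Cauchyness in measure. The only (minor, and arguably cleaner) deviation is the final limit passage: where the paper appeals to right-continuity of the rearrangement, you exploit the $n=0$ case of the hypothesis, which in the discrete setting gives $\sum_k\|x_k\|_\infty<\infty$ and hence sup-norm convergence, so the Lipschitz bound $\mu(n,S)\leq\mu(n,S_N)+\|S-S_N\|_\infty$ closes the argument without any semicontinuity property.
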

\begin{proof} Fix $\varepsilon,\delta>0,$ and choose $N=N(\varepsilon,\delta)$ such that
\begin{equation}\label{epsilon delta est}
\left(\sum_{k=N}^{\infty}\sigma_{2^{k}}\mu(x_k)\right)(\varepsilon)<\delta.
\end{equation}
Then, for any $N_1,N_2\geq N$ and by (2.23) in \cite[Corollary II.2, p. 67]{KPS} and \eqref{epsilon delta est}, we have
\begin{equation}\label{partial sum est}
\begin{split}\mu\left(\varepsilon,\sum_{k=N_{1}+1}^{N_2}x_{k}\right)
&=\mu\left(\varepsilon\cdot\frac{\sum_{k=N_{1}+1}^{N_2}2^{-k}}{\sum_{m=N_{1}+1}^{N_2}2^{-m}},\sum_{n=N_{1}+1}^{N_2}x_{k}\right)\\
&\stackrel{(2.23)}\leq\sum_{k=N_{1}+1}^{N_2}\mu\left(\varepsilon\cdot\frac{2^{-k}}{\sum_{m=N_{1}+1}^{N_2}2^{-m}},x_{k}\right)\\
&\leq\sum_{k=N_{1}+1}^{N_2}\mu(\varepsilon\cdot2^{N_{1}-k},x_{k})\\
&\leq\sum_{k=N}^{\infty}\mu(\varepsilon\cdot2^{-k},x_{k})=\left(\sum_{k=N}^{\infty}\sigma_{2^{k}}\mu(x_k)\right)(\varepsilon)\stackrel{\eqref{epsilon delta est}}<\delta.
\end{split}
\end{equation}
Let us denote $a_{N_1}(n)=\sum_{k=1}^{N_1}x_{k}(n)$ and  $a_{N_2}(n)=\sum_{k=1}^{N_2}x_{k}$ for all $n\in\mathbb{Z}_{+}.$
Then, by the preceding inequality for any $N_{1},N_{2}\geq N,$ we obtain
$$a_{N_2}(n)-a_{N_{1}}(n)\in U(\varepsilon,\delta):=\{x\in \ell_{\infty}(\mathbb{Z}_{+}): m(\{|x|>\delta\})<\varepsilon\},$$
which shows that $\{a_k\}_{k=1}^{\infty}$ is a Cauchy sequence in measure in $\ell_{\infty}(\mathbb{Z}_{+}).$
 Since $\ell_{\infty}(\mathbb{Z}_{+})$ is complete in measure topology, it follows that
the series $\sum_{k=1}^{\infty}x_{k}(n)$ converges in measure for all $n\in\mathbb{Z}_{+}.$
Therefore, since the decreasing rearrangement $\mu$ is continuous from the right, it follows from \eqref{partial sum est} that
$$\mu\left(n,\sum_{k=1}^{\infty}x_k\right)\leq\sum_{k=1}^{\infty}\sigma_{2^{k}}\mu(n,x_k), \,\ n\in\mathbb{Z}_{+}.$$
\end{proof}

\begin{definition}\label{discrete quasi-banach range} Let $E$ be a quasi-Banach symmetric sequence space on $\mathbb{Z}_{+}.$ Let $E(\mathbb{Z}_{+})\subset \ell_{\log}(\mathbb{Z}_{+})$ and let
$S^{d}$ be the operator defined in \eqref{S dis}. Define
$$F(\mathbb{Z}_{+}):=\{x\in(\ell_{1,\infty}+\ell_{\infty})(\mathbb{Z}_{+}):  \exists y\in E(\mathbb{Z}_{+}), \, \mu(x)\leq S^{d}\mu(y)\}$$
such that
\begin{equation}\label{opt norm}\|x\|_{F(\mathbb{Z}_{+})}:=\inf\{\|y\|_{E(\mathbb{Z}_{+})}:\mu(x)\leq S^{d}\mu(y)\}<\infty.
\end{equation}
\end{definition}

The following is the main result of this section.
\begin{thm}\label{quasi-banach opt range} Let $E$ be a quasi-Banach symmetric sequence space on $\mathbb{Z}_{+}.$ If $E(\mathbb{Z}_{+})\subset\ell_{\log}(\mathbb{Z}_{+}),$  then
\begin{enumerate}[{\rm (i)}]
\item $(F(\mathbb{Z}_{+}),\|\cdot\|_{F(\mathbb{Z}_{+})})$ is a quasi-Banach space.
\item Moreover, $(F(\mathbb{Z}_{+}),\|\cdot\|_{F(\mathbb{Z}_{+})})$ is the optimal symmetric quasi-Banach range for the operator $S^{d}$ on $E(\mathbb{Z}_{+}).$
\end{enumerate}
\end{thm}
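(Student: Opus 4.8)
The plan is to first verify that $(F(\mathbb{Z}_+),\|\cdot\|_{F(\mathbb{Z}_+)})$ is a symmetric quasi-normed space, then establish completeness using Lemma \ref{measure conv}, and finally prove the optimality in (ii) by a direct comparison argument. Since the defining condition $\mu(x)\le S^d\mu(y)$ depends on $x$ only through $\mu(x)$, axioms (iii)--(iv) of Definition \ref{Sym} are immediate: if $|x'|\le|x|$ then $\mu(x')\le\mu(x)\le S^d\mu(y)$, and if $\mu(x')=\mu(x)$ the admissible $y$ coincide, giving $\|x'\|_{F}\le\|x\|_{F}$ and $\|x'\|_{F}=\|x\|_{F}$ respectively; axiom (i) follows from $F\subset\ell_{1,\infty}+\ell_\infty\subset\ell_\infty$. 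Positive homogeneity of $\|\cdot\|_{F}$ is clear from the positive homogeneity of $S^d$ and of $\mu$, while $\|x\|_{F}=0$ forces $\mu(0,x)\le(S^d\mu(y))(0)$ for $y$ of arbitrarily small $E$-norm; since $E\subset\ell_{\log}$ controls $(S^d\mu(y))(0)$, this yields $x=0$.

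The structural fact I would isolate first is that $S^d$ almost commutes with dilations: for every decreasing $0\le u\in\ell_{\log}(\mathbb{Z}_+)$ and every $m\in\mathbb{N}$ one has $\sigma_m S^d u\le c\,S^d\sigma_m u$ with an absolute constant $c$. This mirrors the exact identity $\sigma_a S=S\sigma_a$ in the continuous case (a one-line change of variables in the average and the tail integral), and discretely the two sides agree up to bounded error. Granting this, the quasi-triangle inequality follows: given $x_1,x_2\in F$, pick $y_i\in E$ with $\mu(x_i)\le S^d\mu(y_i)$ and $\|y_i\|_E$ close to $\|x_i\|_{F}$, and set $\mu(z):=\mu(y_1)+\mu(y_2)$, a decreasing sequence realized by some $z$ with $\|z\|_E\le C_E(\|y_1\|_E+\|y_2\|_E)$. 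The submajorization bound $\mu(2n,x_1+x_2)\le\mu(n,x_1)+\mu(n,x_2)$ together with monotonicity of $S^d\mu(z)$ gives $\mu(x_1+x_2)\le\sigma_2 S^d\mu(z)\le c\,S^d\sigma_2\mu(z)$, so taking $\mu(w):=c\,\sigma_2\mu(z)$ exhibits $x_1+x_2\in F$ with $\|x_1+x_2\|_{F}\le\|w\|_E\le c\,\|\sigma_2\|_{E\to E}\,C_E(\|x_1\|_{F}+\|x_2\|_{F})$ up to the chosen slack.

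The main obstacle is completeness, and this is precisely what Lemma \ref{measure conv} is built for. Given an $F$-Cauchy sequence, I would pass to a subsequence whose successive differences $x_k$ satisfy $\|x_k\|_{F}\le\varepsilon_k$ with $\varepsilon_k$ decaying as fast as needed, and choose $y_k\in E$ with $\mu(x_k)\le S^d\mu(y_k)$, $\|y_k\|_E\le2\varepsilon_k$. Using $\sigma_{2^k}\mu(x_k)\le\sigma_{2^k}S^d\mu(y_k)\le c\,S^d\sigma_{2^k}\mu(y_k)$ and linearity of $S^d$, the series $\sum_k\sigma_{2^k}\mu(x_k)$ is dominated by $c\,S^d\big(\sum_k\sigma_{2^k}\mu(y_k)\big)$, which is finite a.e. once $\mu(y):=c\sum_k\sigma_{2^k}\mu(y_k)$ lies in $E\subset\ell_{\log}$. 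To secure the latter I would invoke the Aoki--Rolewicz equivalent $p$-norm on $E$ together with the bound $\|\sigma_{2^k}\|_{E\to E}\lesssim 2^k$, so that $\|\mu(y)\|_E^p\lesssim\sum_k(2^k\varepsilon_k)^p<\infty$ provided, say, $\varepsilon_k\le 4^{-k}$; the growth of the dilation constants is the delicate point, and it is defeated by the freedom to prescribe the decay of $\varepsilon_k$. Lemma \ref{measure conv} then yields convergence of $\sum_k x_k$ in measure to some $x$ with $\mu(x)\le\sum_k\sigma_{2^k}\mu(x_k)\le S^d\mu(y)$, so $x\in F$; running the same estimate on the tails $\sum_{k>N}x_k$ shows $\|x-\sum_{k\le N}x_k\|_{F}\to0$, upgrading the in-measure limit to an $F$-limit and giving completeness.

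For part (ii), that $F$ is a range is immediate from \eqref{S proper}: for $y\in E$ one has $\mu(S^d y)\le S^d\mu(y)$ (the latter being decreasing), hence $S^d y\in F$ with $\|S^d y\|_{F}\le\|y\|_E$, so $S^d:E\to F$ is bounded. For optimality, let $G$ be any symmetric quasi-Banach sequence space with $S^d:E\to G$ bounded. Given $x\in F$, choose $y\in E$ with $\mu(x)\le S^d\mu(y)$ and $\|y\|_E\le(1+\varepsilon)\|x\|_{F}$; since $\mu(y)\in E$, the element $S^d\mu(y)$ lies in $G$ with $\|S^d\mu(y)\|_G\le\|S^d\|_{E\to G}\|y\|_E$, and because $S^d\mu(y)$ is decreasing we have $\mu(x)\le\mu(S^d\mu(y))$. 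The symmetric structure of $G$ then forces $x\in G$ with $\|x\|_G\le\|S^d\mu(y)\|_G\le(1+\varepsilon)\|S^d\|_{E\to G}\|x\|_{F}$; letting $\varepsilon\to0$ gives the continuous embedding $F\hookrightarrow G$. Thus $F$ is the least symmetric quasi-Banach range for $S^d$ on $E$, as claimed.
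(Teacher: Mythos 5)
Your proposal is correct and follows essentially the same route as the paper: the same verification of the quasi-norm via submajorization and dilations, the same completeness argument (rapidly convergent subsequence, Aoki--Rolewicz $p$-norm, geometric dilation bounds feeding into Lemma \ref{measure conv}), and an optimality argument for (ii) that is word-for-word the paper's. The one place you genuinely diverge is to your credit. The paper asserts the \emph{exact} identity $\sigma_2(S^d u)=S^d(\sigma_2 u)$, both in its linearity lemma and in the completeness proof; in the discrete setting this identity is false: for $u=(1,0,0,\dots)$ one has $(S^d u)(n)=\tfrac{1}{n+1}$, so $(\sigma_2 S^d u)(0)=1$ while $(S^d\sigma_2 u)(0)=2$. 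Your almost-commutation inequality $\sigma_m S^d u\le c\,S^d\sigma_m u$ (decreasing $u$, absolute constant $c$) is both true --- a routine block-by-block comparison of $m$ consecutive terms gives $c=2$ --- and exactly what the argument needs, since the constant can be absorbed into the majorizing element $c\,\sigma_m u\in E$; so your write-up repairs a small inaccuracy in the paper without changing its architecture. Two minor remarks: your claim $\|\sigma_{2^k}\|_{E\to E}\lesssim 2^k$ is not justified for a general quasi-Banach symmetric $E$ (the correct statement, from the sources the paper cites, is $\|\sigma_{2^k}\|_{E\to E}\le c_E^k$ with $c_E$ depending on $E$), but, as you yourself observe, the argument only needs \emph{some} geometric bound together with the freedom to choose $\varepsilon_k$ decaying faster, so this is harmless; and your explicit tail estimate upgrading in-measure convergence to convergence in the quasi-norm of $F(\mathbb{Z}_+)$ is actually more careful than the paper, which only verifies that the limit lies in $F(\mathbb{Z}_+)$.
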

First we need the following Lemma.
\begin{lem}\label{linearity of op range} Let $E$ be a quasi-Banach symmetric sequence space on $\mathbb{Z}_{+}.$ If $E(\mathbb{Z}_{+})\subset\ell_{\log}(\mathbb{Z}_{+}),$ then the space $F(\mathbb{Z}_{+})$ given in Definition \ref{discrete quasi-banach range} is a linear space.
\end{lem}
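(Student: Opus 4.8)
The plan is to verify directly the two closure properties that make $F(\mathbb{Z}_{+})$ a linear space, namely closure under scalar multiplication and under addition. Throughout I would use that $S^{d}$ is linear, that $(\ell_{1,\infty}+\ell_{\infty})(\mathbb{Z}_{+})$ is itself a linear space, and that by Definition \ref{Sym}(iv) one has $\mu(y)\in E(\mathbb{Z}_{+})$ with $\|\mu(y)\|_{E}=\|y\|_{E}$ whenever $y\in E(\mathbb{Z}_{+})$. Closure under scalar multiplication is immediate: if $\mu(x)\le S^{d}\mu(y)$ with $y\in E(\mathbb{Z}_{+})$, then for any scalar $\lambda$ we have $\mu(\lambda x)=|\lambda|\mu(x)\le|\lambda|S^{d}\mu(y)=S^{d}\mu(\lambda y)$ with $\lambda y\in E(\mathbb{Z}_{+})$, so $\lambda x\in F(\mathbb{Z}_{+})$.

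The substance is additive closure. Given $x_{1},x_{2}\in F(\mathbb{Z}_{+})$, choose $y_{1},y_{2}\in E(\mathbb{Z}_{+})$ with $\mu(x_{i})\le S^{d}\mu(y_{i})$. I would begin with the subadditivity of the decreasing rearrangement, $\mu(m+n,u+v)\le\mu(m,u)+\mu(n,v)$, which at index $k$ with $m=n=\lfloor k/2\rfloor$ (together with the monotonicity of $\mu$) gives $\mu(k,x_{1}+x_{2})\le\mu(\lfloor k/2\rfloor,x_{1})+\mu(\lfloor k/2\rfloor,x_{2})$, that is $\mu(x_{1}+x_{2})\le\sigma_{2}\mu(x_{1})+\sigma_{2}\mu(x_{2})$. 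Since $\sigma_{2}$ is monotone, the hypothesis on the $x_{i}$ yields $\mu(x_{1}+x_{2})\le\sigma_{2}\big(S^{d}\mu(y_{1})\big)+\sigma_{2}\big(S^{d}\mu(y_{2})\big)$. The key point, isolated below, is the regularization estimate $\sigma_{2}(S^{d}w)\le C\,S^{d}w$ valid for every nonnegative decreasing $w$ with an absolute constant $C$. Granting it and using linearity of $S^{d}$, I obtain $\mu(x_{1}+x_{2})\le C S^{d}\mu(y_{1})+C S^{d}\mu(y_{2})=S^{d}\big(C\mu(y_{1})+C\mu(y_{2})\big)$. Setting $y:=C(\mu(y_{1})+\mu(y_{2}))$, which is nonnegative and decreasing so that $\mu(y)=y$ and $S^{d}y=S^{d}\mu(y)$, and which lies in $E(\mathbb{Z}_{+})$ because $E$ is a linear space containing $\mu(y_{1})$ and $\mu(y_{2})$, I conclude $\mu(x_{1}+x_{2})\le S^{d}\mu(y)$; together with $x_{1}+x_{2}\in(\ell_{1,\infty}+\ell_{\infty})(\mathbb{Z}_{+})$ this gives $x_{1}+x_{2}\in F(\mathbb{Z}_{+})$.

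The hard part is the regularization estimate, equivalently $(S^{d}w)(\lfloor n/2\rfloor)\le C\,(S^{d}w)(n)$ for nonnegative decreasing $w$; this is the discrete shadow of the exact scaling identity $S\sigma_{a}=\sigma_{a}S$ for the continuous Calder\'{o}n operator, and I expect it to be the main obstacle. Writing $m=\lfloor n/2\rfloor$, so that $m+1\ge(n+1)/2$ and $n-m\le m+1$, I would split
$$(S^{d}w)(m)=\frac{1}{m+1}\sum_{k=0}^{m}w(k)+\sum_{k=m+1}^{n}\frac{w(k)}{k}+\sum_{k=n+1}^{\infty}\frac{w(k)}{k}$$
and bound each block by $(S^{d}w)(n)$: the first block is at most $\frac{2}{n+1}\sum_{k=0}^{n}w(k)\le 2(S^{d}w)(n)$ via $\tfrac{1}{m+1}\le\tfrac{2}{n+1}$; the middle block is at most $w(m+1)\sum_{k=m+1}^{n}\tfrac1k\le w(m+1)\le\tfrac{1}{m+1}\sum_{k=0}^{m}w(k)\le 2(S^{d}w)(n)$, where the monotonicity of $w$ and the bound $n-m\le m+1$ are used; and the last block is a summand of $(S^{d}w)(n)$, hence at most $(S^{d}w)(n)$. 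Summing yields the claim with $C=5$, the precise value being immaterial. This block-by-block comparison, resting squarely on the monotonicity of $w$, is the only genuinely analytic step; everything else is formal.
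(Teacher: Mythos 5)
Your proof is correct, but its key step differs genuinely from the paper's. Both arguments begin the same way: pick majorants $y_{j}\in E(\mathbb{Z}_{+})$ with $\mu(x_{j})\leq S^{d}\mu(y_{j})$ and invoke the dilation-subadditivity $\mu(x_{1}+x_{2})\leq\sigma_{2}\mu(x_{1})+\sigma_{2}\mu(x_{2})$ (the paper cites \cite[Proposition II.1.7]{BSh}; you rederive it from $\mu(m+n,u+v)\leq\mu(m,u)+\mu(n,v)$). The paths then diverge: the paper pushes $\sigma_{2}$ \emph{through} $S^{d}$, writing $\sigma_{2}(S^{d}\mu(y_{j}))=S^{d}(\sigma_{2}\mu(y_{j}))$, and takes the majorant $|\alpha_{1}|\sigma_{2}\mu(y_{1})+|\alpha_{2}|\sigma_{2}\mu(y_{2})$, which lies in $E$ because the dilation $\sigma_{2}$ is bounded on any symmetric quasi-Banach sequence space; you instead \emph{absorb} $\sigma_{2}$ into a constant via the doubling estimate $(S^{d}w)(\lfloor n/2\rfloor)\leq 5\,(S^{d}w)(n)$ for nonnegative decreasing $w$, proved by your three-block comparison (each block check is sound: $m+1\geq(n+1)/2$ gives the factor $2$ for the averaged block, $n-m\leq m+1$ together with monotonicity of $w$ handles the middle block, and the tail is a summand of $(S^{d}w)(n)$), ending with the majorant $5(\mu(y_{1})+\mu(y_{2}))\in E$. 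Your route buys two things. First, it needs only linearity and symmetry of $E$, not the dilation-boundedness of $E$ (which the paper imports from \cite{HM}). Second, and more substantively, it avoids the exact commutation $\sigma_{2}S^{d}=S^{d}\sigma_{2}$, which is an identity for the \emph{continuous} Calder\'{o}n operator but fails as an equality in the discrete setting (e.g.\ for $w=(1,0,0,\dots)$ one has $\sigma_{2}(S^{d}w)(0)=1$ while $S^{d}(\sigma_{2}w)(0)=2$); it holds only up to absolute constants, and justifying that comparison requires precisely the kind of block estimate you carry out. So your proof is not only valid but, at this step, more self-contained and more careful about the discrete/continuous distinction than the paper's, at the cost of a longer computation.
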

\begin{proof} For $j=1,2,$ let $x_{j}\in F(\mathbb{Z}_{+}),$ where $x_{j}=\{x_{j}(n)\}_{n\in\mathbb{Z}_{+}},$ and $\alpha_{j}$ be any scalars from the field of complex numbers. Then by the Definition \ref{discrete quasi-banach range} of $F(\mathbb{Z}_{+}),$ there exist corresponding $y_{j}\in E(\mathbb{Z}_{+})$ such that
 $\mu(n,x_{j})\leq (S^{d}\mu(y_{j}))(n),$ $n\in\mathbb{Z}_{+}.$ Therefore, for any $x_{j}\in F(\mathbb{Z}_{+})$ and $\alpha_{j},$  by \cite[Proposition II.1.7, p. 41]{BSh}, we have
\begin{equation}\begin{split}\label{linearity}\mu(\alpha_{1}x_{1}+\alpha_{2}x_{2})
&\leq\sigma_{2}\mu(\alpha_{1}x_{1})+\sigma_{2}\mu(\alpha_{2}x_2)=|\alpha_{1}|\cdot\sigma_{2}\mu(x_{1})+|\alpha_{2}|\cdot\sigma_{2}\mu(x_{2})\\
&\leq |\alpha_{1}|\cdot\sigma_{2}\left(S^{d}(\mu(y_{1}))\right)+|\alpha_{2}|\cdot\sigma_{2}\left(S^{d}(\mu(y_{2}))\right)\\
&= S^{d}(|\alpha_{1}|\cdot\sigma_{2}\mu(y_{1}))+S^{d}(|\alpha_{2}|\cdot\sigma_{2}\mu(y_{2}))\\
&=S^{d}(|\alpha_{1}|\cdot\sigma_{2}\mu(y_{1})+|\alpha_{2}|\cdot\sigma_{2}\mu(y_{2})).
\end{split}\end{equation}
Since $E(\mathbb{Z}_{+})$ is linear space and  $|\alpha_{1}|\cdot\sigma_{2}\mu(y_{1})+|\alpha_{2}|\cdot\sigma_{2}\mu(y_{2})\in E(\mathbb{Z}_{+})$, it follows that $\alpha_{1}x_{1}+\alpha_{2}x_{2}\in F(\mathbb{Z}_{+}).$ Thus, $F(\mathbb{Z}_{+})$ is a linear space.
\end{proof}

\begin{lem}\label{quasi-norm of op range} Let $E$ be a quasi-Banach symmetric space on $\mathbb{Z}_{+}.$ If $E(\mathbb{Z}_{+})\subset\ell_{\log}(\mathbb{Z}_{+}),$ then the space $F(\mathbb{Z}_{+})$ defined in Definition \ref{discrete quasi-banach range} is a
quasi-normed space equipped with \eqref{opt norm}.
\end{lem}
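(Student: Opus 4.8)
The plan is to check the three quasi-norm axioms for the functional $\|\cdot\|_{F(\mathbb{Z}_+)}$ in \eqref{opt norm}, knowing already from Lemma \ref{linearity of op range} that $F(\mathbb{Z}_+)$ is linear (we may assume $E(\mathbb{Z}_+)\neq\{0\}$, the case $E=\{0\}$ being trivial). I will use two facts. First, the dilation operator $\sigma_2$ is bounded on $E(\mathbb{Z}_+)$ (see, e.g., \cite{KPS,LSZ}); this is in fact already implicit in Lemma \ref{linearity of op range}, since the witness produced there must lie in $E(\mathbb{Z}_+)$. Second, the set inclusion $E(\mathbb{Z}_+)\subset\ell_{\log}(\mathbb{Z}_+)$ is automatically continuous: both spaces are complete and, by properties (iii)--(iv) of Definition \ref{Sym}, embed continuously into $\ell_\infty$ (one has $\mu(0,y)\,\|e_0\|_E\le\|y\|_E$ and $\mu(0,y)\le\|y\|_{\ell_{\log}}$, where $e_0$ is the first unit vector), so the graph of the inclusion is closed and the closed graph theorem yields a constant $C''$ with $\|y\|_{\ell_{\log}}\le C''\|y\|_E$.

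Homogeneity I would treat first, as it is purely formal: for $\alpha\neq0$ the identity $\mu(\alpha x)=|\alpha|\mu(x)$ and the positive homogeneity of $S^d$ show that $y$ is admissible for $\alpha x$ if and only if $y/\alpha$ is admissible for $x$, and since $\|y\|_E=|\alpha|\,\|y/\alpha\|_E$ the infimum in \eqref{opt norm} gives $\|\alpha x\|_{F(\mathbb{Z}_+)}=|\alpha|\,\|x\|_{F(\mathbb{Z}_+)}$; the case $\alpha=0$ follows from non-degeneracy below. For non-degeneracy, if $\|x\|_{F(\mathbb{Z}_+)}=0$ I pick witnesses $y_m\in E(\mathbb{Z}_+)$ with $\mu(x)\le S^d\mu(y_m)$ and $\|y_m\|_E\to0$. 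Fixing $n$ and splitting $S^d\mu(y_m)(n)$ according to \eqref{S dis}, I bound its averaging part by $\mu(0,y_m)$ and its tail $\sum_{k>n}\mu(k,y_m)/k$ by $2\|y_m\|_{\ell_{\log}}$, so that
\[
\mu(n,x)\le\mu(0,y_m)+2\|y_m\|_{\ell_{\log}}\le\Big(\tfrac{1}{\|e_0\|_E}+2C''\Big)\|y_m\|_E\to0
\]
for every $n$, forcing $x=0$; the converse $\|0\|_{F(\mathbb{Z}_+)}=0$ is immediate from $\mu(0)=0\le S^d\mu(0)$.

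The real work is the quasi-triangle inequality. Given $x_1,x_2\in F(\mathbb{Z}_+)$ and $\varepsilon>0$, I would choose witnesses $y_j$ with $\mu(x_j)\le S^d\mu(y_j)$ and $\|y_j\|_E\le\|x_j\|_{F(\mathbb{Z}_+)}+\varepsilon$. Specializing the chain of inequalities \eqref{linearity} to $\alpha_1=\alpha_2=1$ gives
\[
\mu(x_1+x_2)\le S^d\big(\sigma_2\mu(y_1)+\sigma_2\mu(y_2)\big),
\]
and since $z:=\sigma_2\mu(y_1)+\sigma_2\mu(y_2)$ is nonnegative and nonincreasing one has $\mu(z)=z$, so $z$ is an admissible witness for $x_1+x_2$ and $\|x_1+x_2\|_{F(\mathbb{Z}_+)}\le\|z\|_E$. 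Writing $C_E\ge1$ for the modulus of concavity of $\|\cdot\|_E$ and using the boundedness of $\sigma_2$, I then estimate
\[
\|z\|_E\le C_E\big(\|\sigma_2\mu(y_1)\|_E+\|\sigma_2\mu(y_2)\|_E\big)\le C_E\|\sigma_2\|_{E\to E}\big(\|y_1\|_E+\|y_2\|_E\big),
\]
and letting $\varepsilon\to0$ produces $\|x_1+x_2\|_{F(\mathbb{Z}_+)}\le C_E\|\sigma_2\|_{E\to E}\big(\|x_1\|_{F(\mathbb{Z}_+)}+\|x_2\|_{F(\mathbb{Z}_+)}\big)$. The crux --- and the only genuine obstacle --- is therefore the two structural inputs behind the display above: the commutation $\sigma_2 S^d=S^d\sigma_2$, already exploited in \eqref{linearity}, and the boundedness of $\sigma_2$ on the quasi-Banach space $E$, which is what both keeps $z$ inside $E$ and supplies the finite multiplicative constant in the quasi-triangle inequality.
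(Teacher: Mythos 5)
Your proof is correct, and for the core of the lemma it follows essentially the paper's own route: for the quasi-triangle inequality you specialize \eqref{linearity} to $\alpha_1=\alpha_2=1$, observe that $z=\sigma_2\mu(y_1)+\sigma_2\mu(y_2)$ is nonnegative and decreasing so that $\mu(z)=z$ is an admissible witness (a point the paper leaves implicit), and close with the constant $C_E\|\sigma_2\|_{E\to E}$, which agrees with the paper's $2c_E^2$ up to the order in which the dilation bound and the quasi-triangle inequality in $E$ are applied. The genuine divergence is in non-degeneracy. The paper argues abstractly: $S^d$ is a positive operator from $E(\mathbb{Z}_+)$ into $(\ell_{1,\infty}+\ell_\infty)(\mathbb{Z}_+)$, hence bounded by \cite[Proposition 1.3.5, p. 27]{MN}, so $\mu(x)\le S^d\mu(y_k)$ with $\|y_k\|_E\to 0$ forces $\|\mu(x)\|_{\ell_{1,\infty}+\ell_\infty}=0$ and $x=0$. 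You instead produce the needed continuity by hand: the pointwise bound $(S^d\mu(y))(n)\le \mu(0,y)+2\|y\|_{\ell_{\log}}$ read off directly from \eqref{S dis}, combined with continuity of the inclusion $E(\mathbb{Z}_+)\subset\ell_{\log}(\mathbb{Z}_+)$, which you get from the closed graph theorem (legitimate here: both spaces are complete, hence F-spaces by Aoki--Rolewicz, and both embed continuously into $\ell_\infty$, so the graph of the inclusion is closed). What your route buys is self-containedness: it replaces the automatic-continuity-of-positive-operators citation --- which \cite{MN} states in the Banach-lattice setting, so its use for the quasi-Banach pair here strictly speaking needs a word of adaptation --- by two elementary estimates; what it costs is the extra appeal to the closed graph theorem, which the paper's argument does not need. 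Both arguments terminate identically, with $\mu(n,x)\le C\|y_k\|_E\to 0$ for every $n$, hence $x=0$.
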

\begin{proof}Let us prove that the expression
$$\|x\|_{F(\mathbb{Z}_{+})}:=\inf\{\|y\|_{E(\mathbb{Z}_{+})}:\exists y\in E(\mathbb{Z}_{+}), \,\ \mu(x)\leq S^{d}\mu(y)\}$$
defines a quasi-norm in $F(\mathbb{Z}_{+}).$ Clearly, if $x=0,$ then, by \eqref{opt norm}, $\|x\|_{F(\mathbb{Z}_{+})}=0,$ and for every scalar $\alpha,$ we have $\|\alpha
x\|_{F(\mathbb{Z}_{+})}=|\alpha|\cdot\|x\|_{F(\mathbb{Z}_{+})}.$ We shall prove the non-trivial part. If $\|x\|_{F(\mathbb{Z}_{+})}=0,$ then there exists $y_k$ $(k\in\mathbb{N})$ in $E(\mathbb{Z}_{+})$ with $\mu(x)\leq S^{d}\mu(y_k)$ such that
$\|y_k\|_{E(\mathbb{Z}_{+})}\rightarrow0,$ as $k\rightarrow \infty.$ By assumption, we have that $S^{d}:E(\mathbb{Z}_{+})\rightarrow (\ell_{1,\infty}+\ell_{\infty})(\mathbb{Z}_{+})$ (see \ref{S dis}). Since $S^{d}$ is a positive operator (see \cite[Chapter
III, p. 134]{BSh}), it follows from \cite[Proposition 1.3.5, p. 27]{MN} that
 $S^{d}:E(\mathbb{Z}_{+})\rightarrow (\ell_{1,\infty}+\ell_{\infty})(\mathbb{Z}_{+})$ is bounded. Hence, $\|S^{d}\mu(y_k)\|_{(\ell_{1,\infty}+\ell_{\infty})(\mathbb{Z}_{+})}\rightarrow 0$ as $k\rightarrow \infty.$  From the condition $\mu(x)\leq S^{d}\mu(y_k),$ for
 every $k\in\mathbb{N},$ we have
$$\|\mu(x)\|_{(\ell_{1,\infty}+\ell_{\infty})(\mathbb{Z}_{+})}\leq\|S^{d}\mu(y_k)\|_{(\ell_{1,\infty}+\ell_{\infty})(\mathbb{Z}_{+})}\rightarrow 0,$$
which shows that $x=0.$

Let us prove that $\|\cdot\|_{F(\mathbb{Z}_{+})}$ satisfies the quasi-triangle inequality. For $j=1,2,$ let $x_{j}\in F(\mathbb{Z}_{+}),$ and fix $\varepsilon>0.$ Then there exist $y_{j}\in E(\mathbb{Z}_{+})$ with
$\mu(x_{j})\leq S^{d}\mu(y_{j})$ and $\|y_{j}\|_{E(\mathbb{Z}_{+})}<\|x_{j}\|_{F(\mathbb{Z}_{+})}+\varepsilon.$
Hence, from \eqref{opt norm} and since $\|\cdot\|_{E(\mathbb{Z}_{+})}$ is a quasi-norm, it follows from  \eqref{linearity} and \cite[Remark 18]{F}
 that
\begin{eqnarray*}\begin{split}\|x_{1}+x_{2}\|_{F(\mathbb{Z}_{+})}
&\leq\|\sigma_{2}\mu(y_{1})+\sigma_{2}\mu(y_{2})\|_{E(\mathbb{Z}_{+})}=\|\sigma_{2}\left(\mu(y_{1})+\mu(y_{2})\right)\|_{E(\mathbb{Z}_{+})}\\
&\leq2\cdot c_{E}\|\mu(y_{1})+\mu(y_{2})\|_{E(\mathbb{Z}_{+})}\leq2\cdot c^{2}_{E}\left(\|y_{1}\|_{E(\mathbb{Z}_{+})}+\|y_{2}\|_{E(\mathbb{Z}_{+})}\right)
\end{split}
\end{eqnarray*}
and by choice of $y_{1},y_{2}\in E(\mathbb{Z}_{+})$
$$\|x_{1}+x_{2}\|_{F(\mathbb{Z}_{+})}\leq2\cdot c^{2}_{E}\left(\|x_{1}\|_{F(\mathbb{Z}_{+})}+\|x_{2}\|_{F(\mathbb{Z}_{+})}\right)+4c^{2}_{E}\cdot\varepsilon.$$
Since $\varepsilon$ is arbitrary, letting $\varepsilon\rightarrow 0,$ we obtain that $\|\cdot\|_{F(\mathbb{Z}_{+})}$ defines a quasi-norm. Thus, $(F(\mathbb{Z}_{+}),\|\cdot\|_{F(\mathbb{Z}_{+})})$ is a linear quasi-normed space.
\end{proof}
Now we are ready to prove Theorem \ref{quasi-banach opt range}.
\begin{proof}[Proof of Theorem \ref{quasi-banach opt range}]
 By Lemma \ref{linearity of op range} and \ref{quasi-norm of op range}, $(F(\mathbb{Z}_{+}),\|\cdot\|_{F(\mathbb{Z}_{+})})$ is a linear quasi-normed space.

First, we prove that $F(\mathbb{Z}_{+})$ is a quasi-Banach space. To show that $F(\mathbb{Z}_{+})$ is a quasi-Banach space, it remains to see that it is complete. Since the dilation operator is bounded in any quasi-Banach symmetric space (see \cite[Proposition 2 (c)]{HM}), it follows that there is a constant $c_{E}$ depending on $E$ such that
\begin{equation}\label{dilation bound}
\|\sigma_{2^{k}}y\|_{E(\mathbb{Z}_{+})}\leq c_{E}^{k}\|y\|_{E(\mathbb{Z}_{+})}
\end{equation}
(see \cite[Remark 18]{F}) for all $y\in E(\mathbb{Z}_{+})$ and $k\in \mathbb{N}.$
On the other hand, since $E(\mathbb{Z}_{+})$ is quasi-Banach symmetric space, it follows from Aoki-Rolewicz theorem that every quasi-normed space (such as $E(\mathbb{Z}_{+}))$ is metrizable (see \cite[Theorem 1.3]{KPR}) and there exists $0<p<1,$ such that
$$\|y_{1}+y_{2}\|_{E(\mathbb{Z}_{+})}^{p}\leq\|y_{1}\|_{E(\mathbb{Z}_{+})}^{p}+\|y_{2}\|_{E(\mathbb{Z}_{+})}^{p}$$
for all $y_{1},y_{2}\in E(\mathbb{Z}_{+}).$ We have to show that an arbitrary Cauchy sequence in $F(\mathbb{Z}_{+})$
converges to an element from $F(\mathbb{Z}_{+}).$ Fix such a sequence $\{x_k\}_{k=1}^{\infty}\subset F(\mathbb{Z}_{+}).$  Without loss of generality, assume that for $\varepsilon< c_{E}^{-1},$ we have
$$\|x_{k+1}-x_{k}\|_{F(\mathbb{Z}_{+})}\leq \varepsilon^{k} $$ and
$$\mu(x_{k+1}-x_{k})\leq S^{d}\mu(y_k)$$ such that
$\|y_k\|_{E(\mathbb{Z}_{+})}\leq 2\cdot \varepsilon^{k}.$ Let us show that the series $\sum_{k=1}^{\infty}\sigma_{2^{k}}\mu(x_{k+1}-x_{k})$ converges a.e.
 Since $S^{d}$ is linear and commutes with the dilation operator,
it follows that
\begin{equation}\begin{split}\label{partial S est}
\sum_{k=1}^{\infty}\sigma_{2^{k}}\mu(x_{k+1}-x_{k})\leq\sum_{k=1}^{\infty}\sigma_{2^{k}}S^{d}\mu(y_{k})=\sum_{n=1}^{\infty}S^{d}(\sigma_{2^{n}}\mu(y_{n}))
=S^{d}\left(\sum_{k=1}^{\infty}\sigma_{2^{k}}\mu(y_{k})\right).
\end{split}\end{equation}
Hence,
\begin{eqnarray*}\begin{split}
\left\|\sum_{k=1}^{\infty}\sigma_{2^{k}}\mu(y_{k})\right\|_{E(\mathbb{Z}_{+})}^{p}
\leq\sum_{k=1}^{\infty}\|\sigma_{2^{k}}\mu(y_{k})\|_{E(\mathbb{Z}_{+})}^{p}\\
\stackrel{\eqref{dilation bound}}\leq\sum_{k=1}^{\infty}c_{E}^{kp}\|y_{k}\|_{E(\mathbb{Z}_{+})}^{p}
\leq\sum_{k=1}^{\infty}(c_{E}\cdot\varepsilon)^{kp}<\infty.
\end{split}\end{eqnarray*}
Therefore, the series $\sum_{k=1}^{\infty}\sigma_{2^{k}}\mu(y_{k})$ converges in a.e. in $E(\mathbb{Z}_{+}).$ Since $S^{d}$ is continuous on $E(\mathbb{Z}_{+})$ by assumption, it follows from \eqref{partial S est} that the series
$\sum_{k=1}^{\infty}\sigma_{2^{k}}\mu(x_{k+1}-x_{k})$ belongs to $F(\mathbb{Z}_{+}).$ Then, by Lemma \ref{measure conv}, the series $\sum_{k=1}^{\infty}(x_{k+1}-k_{n})$ converges in measure and belongs to $F(\mathbb{Z}_{+}),$
and we have
$$\|x-x_{1}\|_{F(\mathbb{Z}_{+})}=\left\|\sum_{k=1}^{\infty}(x_{k+1}-x_{k})\right\|_{F(\mathbb{Z}_{+})}\stackrel{\eqref{est conv}}\leq\left\|\sum_{k=1}^{\infty}\sigma_{2^{k}}\mu(x_{k+1}-x_{k})\right\|_{F(\mathbb{Z}_{+})}<\infty.$$
 This shows that $x\in F(\mathbb{Z}_{+}).$ So, $F(\mathbb{Z}_{+})$ is complete. On the other hand, since $\|x\|_{F(\mathbb{Z}_{+})}=\|\mu(x)\|_{F(\mathbb{Z}_{+})},$ it follows that $F(\mathbb{Z}_{+})$ is a symmetric space of sequences. So, the  space $(F(\mathbb{Z}_{+}),\|\cdot\|_{F(\mathbb{Z}_{+})})$ is a quasi-Banach symmetric sequence space.

 Next, we prove second part of the theorem. First we need to show that $S^{d}:E(\mathbb{Z}_{+})\rightarrow F(\mathbb{Z}_{+})$ is bounded. Indeed, let $x\in E(\mathbb{Z}_{+}).$ By \eqref{S proper}, we obtain
 $$\|S^{d}x\|_{F(\mathbb{Z}_{+})}=\inf\{\|y\|_{E(\mathbb{Z}_{+})}:\mu(S^dx)\leq S^d\mu(y)\}\leq\|x\|_{E(\mathbb{Z}_{+})}.$$
 Since $x\in E(\mathbb{Z}_{+})$ is arbitrary, it follows that $S^{d}:E(\mathbb{Z}_{+})\rightarrow F(\mathbb{Z}_{+})$ is bounded.

  Now, suppose that $G(\mathbb{Z}_{+})$ is another symmetric quasi-Banach sequence space such that $S^{d}:E(\mathbb{Z}_{+})\rightarrow G(\mathbb{Z}_{+})$ is bounded, and let us show that $F(\mathbb{Z}_{+})\subset G(\mathbb{Z}_{+}).$
If  $x\in F(\mathbb{Z}_{+}),$ then there is $y\in E(\mathbb{Z}_{+})$ such that $\mu(x)\leq S^{d}\mu(y).$ Hence, we have
$$\|x\|_{G(\mathbb{Z}_{+})}\leq\|S^{d}\mu(y)\|_{G(\mathbb{Z}_{+})}\leq\|S^{d}\|_{E(\mathbb{Z}_{+})\rightarrow G(\mathbb{Z}_{+})}\|y\|_{E(\mathbb{Z}_{+})}.$$
which implies via taking infimum over all such $y$'s, we obtain
$$\|x\|_{G(\mathbb{Z}_{+})}\leq\|S^{d}\|_{E(\mathbb{Z}_{+})\rightarrow G(\mathbb{Z}_{+})}\|x\|_{F(\mathbb{Z}_{+})},$$
which means $F(\mathbb{Z}_{+})\subset G(\mathbb{Z}_{+}).$ Thus, $F(\mathbb{Z}_{+})$ is the minimal space among symmetric quasi-Banach sequence spaces. This completes the proof.
\end{proof}

\begin{rmk}\label{rem1} The space $F$ given in the Definition \ref{discrete quasi-banach range} is defined similarly on $\mathbb{Z},$ and by repeating the same method as in the Theorem \ref{quasi-banach opt range} (i), it becomes a symmetric quasi-Banach space.
\end{rmk}

The following result provides a solution to the Problem \ref{dis case}.
\begin{thm}\label{opt hilbert}Let the assumptions of the Theorem \ref{quasi-banach opt range} hold. Then, so defined space $F(\mathbb{Z})$ is the optimal range for the Hilbert transform $\mathcal{H}^{d}$ on $E(\mathbb{Z}).$
\end{thm}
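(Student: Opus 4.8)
The plan is to verify the two hallmarks of an optimal range directly from the two-sided comparison between $\mathcal{H}^{d}$ and $S^{d}$ recorded in Remark \ref{dom of Hilbert trans}, thereby reducing everything to Theorem \ref{quasi-banach opt range} and Remark \ref{rem1}. Concretely, I must show (a) that $\mathcal{H}^{d}\colon E(\mathbb{Z})\to F(\mathbb{Z})$ is bounded, and (b) that $F(\mathbb{Z})\subseteq G(\mathbb{Z})$ continuously for every symmetric quasi-Banach sequence space $G(\mathbb{Z})$ for which $\mathcal{H}^{d}\colon E(\mathbb{Z})\to G(\mathbb{Z})$ is bounded. The upper estimate $\mu(\mathcal{H}^{d}x)\leq c_{abs}S^{d}\mu(x)$ will deliver (a), while the lower estimate $\tfrac{1}{2\pi}(S^{d}\mu(x))(n)\leq|(\mathcal{H}^{d}x)(-n)|$ for nonnegative decreasing $x$ will deliver (b).

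For (a), I would start from $\mu(\mathcal{H}^{d}x)\leq c_{abs}S^{d}\mu(x)$ and set $y:=c_{abs}\mu(x)\in E(\mathbb{Z}_{+})$, which is legitimate since $x\in E(\mathbb{Z})\subset\ell_{\log}(\mathbb{Z})$ guarantees $\mu(x)\in E(\mathbb{Z}_{+})$. Because $\mu(x)$ is already decreasing one has $\mu(y)=c_{abs}\mu(x)$, and hence $\mu(\mathcal{H}^{d}x)\leq S^{d}\mu(y)$. By Definition \ref{discrete quasi-banach range} this exhibits $\mathcal{H}^{d}x\in F(\mathbb{Z})$ with $\|\mathcal{H}^{d}x\|_{F(\mathbb{Z})}\leq\|y\|_{E(\mathbb{Z}_{+})}=c_{abs}\|x\|_{E(\mathbb{Z})}$, giving boundedness with norm at most $c_{abs}$.

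For (b), fix such a $G(\mathbb{Z})$ and take $x\in F(\mathbb{Z})$ with a witness $y\in E(\mathbb{Z}_{+})$ satisfying $\mu(x)\leq S^{d}\mu(y)$. Let $z$ be the extension of $\mu(y)$ to $\mathbb{Z}$ by zero on the negative indices; then $z=z\chi_{[0,\infty)}$ is nonnegative and decreasing, $\mu(z)=\mu(y)$, and $\|z\|_{E(\mathbb{Z})}=\|y\|_{E(\mathbb{Z}_{+})}$. Applying the lower estimate of Remark \ref{dom of Hilbert trans} to $z$ gives $\tfrac{1}{2\pi}(S^{d}\mu(y))(n)\leq|(\mathcal{H}^{d}z)(-n)|$ for all $n\in\mathbb{Z}_{+}$. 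The crucial observation is that $n\mapsto|(\mathcal{H}^{d}z)(-n)|=\tfrac1\pi\sum_{k\geq0}z(k)/(n+k)$ is itself \emph{decreasing}; consequently the values $|(\mathcal{H}^{d}z)(0)|,\dots,|(\mathcal{H}^{d}z)(-n)|$ furnish $n+1$ entries of $\mathcal{H}^{d}z$ that all dominate $|(\mathcal{H}^{d}z)(-n)|$, so that $|(\mathcal{H}^{d}z)(-n)|\leq\mu(n,\mathcal{H}^{d}z)$. Combining, $\tfrac{1}{2\pi}S^{d}\mu(y)\leq\mu(\mathcal{H}^{d}z)$, and therefore $\mu(x)\leq S^{d}\mu(y)\leq2\pi\,\mu(\mathcal{H}^{d}z)$. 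Since $\mathcal{H}^{d}z\in G(\mathbb{Z})$ by hypothesis and $G(\mathbb{Z})$ is symmetric, properties (iii)--(iv) of Definition \ref{Sym} yield $\|x\|_{G(\mathbb{Z})}\leq2\pi\|\mathcal{H}^{d}z\|_{G(\mathbb{Z})}\leq2\pi\|\mathcal{H}^{d}\|_{E\to G}\|y\|_{E(\mathbb{Z}_{+})}$. Taking the infimum over all admissible $y$ gives $\|x\|_{G(\mathbb{Z})}\leq2\pi\|\mathcal{H}^{d}\|_{E\to G}\|x\|_{F(\mathbb{Z})}$, i.e. $F(\mathbb{Z})\hookrightarrow G(\mathbb{Z})$, which is the asserted optimality.

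The step I expect to be the main obstacle is the passage, inside (b), from the pointwise lower bound living only on the negative half-axis to a genuine bound on the full decreasing rearrangement $\mu(\mathcal{H}^{d}z)$ over all of $\mathbb{Z}$; this hinges entirely on the monotonicity of $n\mapsto|(\mathcal{H}^{d}z)(-n)|$ for nonnegative decreasing $z$, which converts the half-axis estimate into the rearrangement estimate $|(\mathcal{H}^{d}z)(-n)|\leq\mu(n,\mathcal{H}^{d}z)$. A secondary point requiring care is purely bookkeeping in the quasi-Banach category: the comparison constants $c_{abs}$ and $2\pi$ are harmless, and one should confirm that the space $F(\mathbb{Z})$ produced by Remark \ref{rem1} is indeed a symmetric quasi-Banach space, so that the minimality comparison in (b) is carried out against the correct class of competitors.
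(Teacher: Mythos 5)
Your part (a) is exactly the paper's boundedness argument (the paper simply cites the discrete version of [BSh, Theorem III.4.8], i.e. $\mu(\mathcal{H}^{d}x)\leq c_{abs}S^{d}\mu(x)$, where you spell out the witness $y=c_{abs}\mu(x)$), and your part (b) follows the same strategy as the paper's minimality argument: the paper invokes the discrete version of [BSh, Proposition III.4.10] (for each $x$ there is $y$ equimeasurable with $x$ such that $S^{d}\mu(x)\leq 2\mu(\mathcal{H}^{d}y)$), deduces that $S^{d}\colon E(\mathbb{Z}_{+})\to G(\mathbb{Z}_{+})$ is bounded, and then applies the minimality in Theorem \ref{quasi-banach opt range}(ii), whereas you prove that comparison principle inline and conclude directly. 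The inline proof, however, fails at precisely the step you flag as crucial. The entry at position $0$ need not dominate: since $z$ vanishes on the negative indices and the sum defining $(\mathcal{H}^{d}z)(0)$ excludes $k=0$, one has $(\mathcal{H}^{d}z)(0)=-\tfrac{1}{\pi}\sum_{k\geq 1}z(k)/k$, in which the term $z(0)$ is entirely absent. Taking $z$ to be the indicator sequence of $\{0\}$ (a perfectly admissible witness, $y=\mu(y)=z|_{\mathbb{Z}_{+}}$) gives $(\mathcal{H}^{d}z)(0)=0$ while $|(\mathcal{H}^{d}z)(-n)|=\tfrac{1}{\pi n}>0$, so your list of ``$n+1$ dominating entries'' contains only $n$ valid ones, namely those at positions $-1,\dots,-n$. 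For the same reason the formula $|(\mathcal{H}^{d}z)(-n)|=\tfrac{1}{\pi}\sum_{k\geq 0}z(k)/(n+k)$, and the lower bound of Remark \ref{dom of Hilbert trans}, should only be used for $n\geq 1$.

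Consequently monotonicity yields only $|(\mathcal{H}^{d}z)(-n)|\leq\mu(n-1,\mathcal{H}^{d}z)$, hence the shifted estimate $\mu(n,x)\leq 2\pi\,\mu(n-1,\mathcal{H}^{d}z)$ for $n\geq 1$ and nothing at $n=0$; the inequality $\mu(x)\leq 2\pi\mu(\mathcal{H}^{d}z)$ on which your conclusion rests is not established (whether it is even true in general would require a separate argument using the entries on the positive half-axis). The gap is fixable at the cost of a constant. From the kernel inequality recorded in Subsection \ref{calderon} one gets
\begin{equation*}
(S^{d}\mu(y))(n)\leq\frac{n+2}{n+1}\,(S^{d}\mu(y))(n+1)\leq 2\,(S^{d}\mu(y))(n+1),
\end{equation*}
and the $n+1$ positions $-1,\dots,-(n+1)$ do all dominate $|(\mathcal{H}^{d}z)(-(n+1))|$; combining this with Remark \ref{dom of Hilbert trans} at index $n+1$ gives $\mu(n,x)\leq(S^{d}\mu(y))(n)\leq 4\pi\,\mu(n,\mathcal{H}^{d}z)$ for every $n\in\mathbb{Z}_{+}$. (Alternatively, absorb the shift into the dilation $\sigma_{2}$, which is bounded on every symmetric quasi-Banach sequence space.) With this correction the remainder of your part (b) --- passing to $\|\cdot\|_{G}$ by symmetry of $G$ and taking the infimum over witnesses $y$ --- is sound, and in effect you will have reproved the discrete analogue of [BSh, Proposition III.4.10] that the paper takes as a black box.
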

\begin{proof}It follows from the Theorem \ref{quasi-banach opt range} (i)  and Remark \ref{rem1} that $F(\mathbb{Z})$ is a quasi-Banach symmetric space of sequences. Let us show that $\mathcal{H}^{d}:E(\mathbb{Z})\rightarrow F(\mathbb{Z})$ is bounded. Since $S^{d}:E(\mathbb{Z})\rightarrow F(\mathbb{Z})$ is bounded by Theorem \ref{quasi-banach opt range} (ii), it follows from the discrete version of \cite[Theorem III. 4.8, p. 138]{BSh} that $\mathcal{H}^{d}:E(\mathbb{Z})\rightarrow F(\mathbb{Z})$ is bounded.
Now, suppose that $G(\mathbb{Z})$ is another symmetric quasi-Banach sequence space such that $\mathcal{H}^{d}:E(\mathbb{Z})\rightarrow G(\mathbb{Z})$ is bounded, and
let us show that $F(\mathbb{Z})\subset G(\mathbb{Z}).$
If $x\in E(\mathbb{Z}_{+}),$ then by \cite[Proposition III. 4.10, p. 140]{BSh} (here we again use discrete version) there is a function $y\in E(\mathbb{Z})$ equimeasurable with $x$ such that $S^{d}\mu(x)\leq 2\mu(\mathcal{H}^{d}y).$ Then
\begin{eqnarray*}\begin{split}\|S^{d}\mu(x)\|_{G(\mathbb{Z}_{+})}
&\leq 2\|\mu(\mathcal{H}y)\|_{G(\mathbb{Z}_{+})}=2\|\mathcal{H}y\|_{G(\mathbb{Z})}\\
&\leq c_{abs}\|y\|_{E(\mathbb{Z})}=c_{abs}\|x\|_{E(\mathbb{Z}_{+})},
\end{split}\end{eqnarray*}
which shows that
$$S^{d}:E(\mathbb{Z}_{+})\rightarrow G(\mathbb{Z}_{+})$$
is bounded. But, since $F(\mathbb{Z}_{+})$ is the least space such that $S^{d}:E(\mathbb{Z}_{+})\rightarrow F(\mathbb{Z}_{+})$ (see Theorem \ref{quasi-banach opt range} (ii)), it follows that
 $F(\mathbb{Z})\subset G(\mathbb{Z}).$ This completes the proof.
\end{proof}

A non-commutative extension of the following result was proved in \cite[Proposition 35]{STZ}.
\begin{proposition}\label{weak l1} Let $E(\mathbb{Z}_{+})=\ell_{1,\infty}(\mathbb{Z}_{+}),$ then
$$F(\mathbb{Z}_{+})=\left\{a\in (\ell_{1,\infty}+\ell_{\infty})(\mathbb{Z}_{+}):\exists c_{a}, \mu(n,a)\leq c_{a}\frac{\log(n+2)}{n+1}, \, n\in \mathbb{Z}_{+}\right\}.$$
\end{proposition}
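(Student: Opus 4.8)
The plan is to reduce the whole statement to a single explicit computation of $S^d$ on the extremal element of $\ell_{1,\infty}(\mathbb{Z}_+)$, namely the decreasing sequence $e:=\{(k+1)^{-1}\}_{k\geq0}$, and then to exploit the positivity of $S^d$ to sandwich an arbitrary $y\in\ell_{1,\infty}(\mathbb{Z}_+)$ between scalar multiples of $e$. Note first that $e\in\ell_{1,\infty}(\mathbb{Z}_+)\subset\ell_{\log}(\mathbb{Z}_+)$, so $S^d e$ is well defined.

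First I would compute $S^d e$ directly from \eqref{S dis}. Since $e$ is already decreasing, $\mu(e)=e$, and the tail telescopes, so that
\[
(S^d e)(n)=\frac{1}{n+1}\sum_{k=0}^{n}\frac{1}{k+1}+\sum_{k=n+1}^{\infty}\frac{1}{k(k+1)}=\frac{H_{n+1}+1}{n+1},\qquad H_{n+1}:=\sum_{j=1}^{n+1}\frac1j.
\]
The elementary integral comparisons $\log(n+2)\leq H_{n+1}\leq 1+\log(n+2)$ then give the two-sided estimate
\[
\frac{\log(n+2)}{n+1}\leq (S^d e)(n)\leq c_{\mathrm{abs}}\,\frac{\log(n+2)}{n+1},\qquad n\in\mathbb{Z}_+,
\]
with an absolute constant (here one uses $\log(n+2)\geq\log 2>0$ to absorb the additive constant into a multiplicative one, valid uniformly down to $n=0$). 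This equivalence $(S^d e)(n)\asymp\frac{\log(n+2)}{n+1}$ is the heart of the matter.

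For the inclusion $F(\mathbb{Z}_+)\subseteq\{\ldots\}$, take $x\in F(\mathbb{Z}_+)$ and $y\in\ell_{1,\infty}(\mathbb{Z}_+)$ with $\mu(x)\leq S^d\mu(y)$. By definition of the quasi-norm, $\mu(k,y)\leq\|y\|_{\ell_{1,\infty}}(k+1)^{-1}$, i.e. $\mu(y)\leq\|y\|_{\ell_{1,\infty}}\,e$ pointwise; since $S^d$ is a positive (hence monotone) linear operator, the upper bound above yields
\[
\mu(n,x)\leq (S^d\mu(y))(n)\leq\|y\|_{\ell_{1,\infty}}\,(S^d e)(n)\leq c_{\mathrm{abs}}\|y\|_{\ell_{1,\infty}}\,\frac{\log(n+2)}{n+1},
\]
so $x$ lies in the right-hand side with $c_x=c_{\mathrm{abs}}\|y\|_{\ell_{1,\infty}}$. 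Conversely, if $a\in(\ell_{1,\infty}+\ell_\infty)(\mathbb{Z}_+)$ satisfies $\mu(n,a)\leq c_a\frac{\log(n+2)}{n+1}$, set $y:=c_a e\in\ell_{1,\infty}(\mathbb{Z}_+)$, so that $\mu(y)=c_a e$; the lower bound above gives
\[
\mu(n,a)\leq c_a\,\frac{\log(n+2)}{n+1}\leq c_a\,(S^d e)(n)=(S^d\mu(y))(n),
\]
whence $a\in F(\mathbb{Z}_+)$ by Definition \ref{discrete quasi-banach range}.

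The only genuine work is the two-sided estimate for $S^d e$; everything else is the monotonicity of $S^d$ applied to the pointwise sandwich $c_1 e\leq\mu(y)\leq c_2 e$ characterizing $\ell_{1,\infty}(\mathbb{Z}_+)$. I therefore expect no serious obstacle, the only point requiring care being to keep the constants in the harmonic-sum bounds absolute and uniform all the way down to $n=0$, so that the equivalence $(S^d e)(n)\asymp\frac{\log(n+2)}{n+1}$ holds on the whole of $\mathbb{Z}_+$ and not merely asymptotically.
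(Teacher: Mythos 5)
Your proposal is correct and takes essentially the same route as the paper: both reduce the statement to computing $S^{d}$ on the extremal sequence $e(k)=\frac{1}{k+1}$ and establishing the two-sided equivalence $(S^{d}e)(n)\asymp\frac{\log(n+2)}{n+1}$. Your write-up is in fact more complete than the paper's sketch, which states the equivalence only ``for large $n$'' and leaves implicit the two inclusions that you carry out via positivity of $S^{d}$ and the pointwise sandwich $\mu(y)\leq\|y\|_{\ell_{1,\infty}}\,e$.
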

Indeed, if $\mu(k,a)=\frac{1}{k+1}, \,\ k\in\mathbb{Z}_{+},$ then by \eqref{S dis}, we have
$$(S^{d}\mu(a))(n):=\frac{1}{n+1}\sum_{k=0}^{n}\frac{1}{k+1}+\sum_{k=n+1}^{\infty}\frac{1}{k(k+1)}$$
and
$$
(S^{d}\mu(a))(n)\approx \frac{\log(n+1)}{n+1}
$$
for large $n,$ where the symbol $\mathcal{A}\approx \mathcal{B}$ indicates that there exists universal positive constants $c_{1},c_{2}$ independent of all important parameters, such that $\mathcal{A}\leq
c_{1}\mathcal{B}$ and $\mathcal{B}\leq
c_{2}\mathcal{A}.$
Therefore, if $E(\mathbb{Z}_{+})=\ell_{1,\infty}(\mathbb{Z}_{+}),$ then the optimal range for the discrete Calder\'{o}n operator $S^{d}$ is
$$F(\mathbb{Z}_{+})=\{a\in(\ell_{1,\infty}+\ell_{\infty})(\mathbb{Z}_{+}):\exists c_{a}, \mu(n,a)\leq c_{a}\frac{\log(n+2)}{n+1}, \, n\in \mathbb{Z}_{+}\},$$
where $c_{a}$ is a constant depending only on the sequence $a.$

\begin{rmk} It follows from the Theorem \ref{opt hilbert} and previous result that if $E(\mathbb{Z})=\ell_{1,\infty}(\mathbb{Z}),$ then the optimal range for the discrete Hilbert transform $\mathcal{H}^{d}$ is
$F(\mathbb{Z}),$ which is defined as in the Proposition \ref{weak l1}.
\end{rmk}


\section{Acknowledgment}

I would like to thank Professor F. Sukochev for bringing this problem to my attention and for the support of the one year
visit to School of Mathematics and Statistics, UNSW, and the warm atmosphere at the department, where a preliminary version of the paper was written.
I also would like to thank Dr. D. Zanin for his very useful comments and helps which led to many corrections and improvements. I thank the anonymous referee for reading the paper carefully and providing thoughtful comments, which improved the exposition of the paper.
This work was partially supported by the grant No. AP05133283 of the Science Committee of the Ministry of Education and Science of the Republic of Kazakhstan.

\end{document}